\DeclareFontFamily{OT1}{rsfs}{}
\DeclareFontShape{OT1}{rsfs}{n}{it}{<-> rsfs10}{}
\DeclareMathAlphabet{\curly}{OT1}{rsfs}{n}{it}
\newcommand\I{\mathscr I}
\newcommand\E{\mathscr E}
\newcommand\X{\mathscr X}
\newcommand\Y{\mathscr Y}
\newcommand\PP{\mathbb P}
\newcommand\C{\mathbb C}
\newcommand\Q{\mathbb Q}
\newcommand\R{\mathbb R}
\newcommand\Z{\mathbb Z}
\newcommand\N{\mathbb N}
\newcommand{\Rt}[1]{\stackrel{#1\,}{\longrightarrow}}
\newcommand\Into{\ar@{^{ (}->}[r]}
\newfont{\bigtimesfont}{cmsy10 scaled \magstep5}
\newcommand{\bigtimes}{\mathop{\lower0.9ex\hbox{\bigtimesfont\symbol2}}}
\newcommand\rk{\operatorname{rank}}
\newcommand\Tr{\operatorname{tr}}
\newcommand\im{\operatorname{im}}
\newcommand\Ext{\operatorname{Ext}}
\newcommand\beq[1]{\begin{equation}\label{#1}}
\newcommand\eeq{\end{equation}}
\newcommand\beqa{\begin{eqnarray*}}
\newcommand\eeqa{\end{eqnarray*}}
\makeatletter \@addtoreset{equation}{section} \makeatother
\newtheorem{defn}[equation]{Definition}
\newtheorem{thm}[equation]{Theorem}
\newtheorem{lem}[equation]{Lemma}
\newtheorem{prop}[equation]{Proposition}
\title[]{A simple limit for slope instability}
\author{J. Stoppa and E. Tenni}
\begin{document}
\begin{abstract}\noindent
Ross and Thomas have shown that subschemes can K-destabilise polarised varieties, yielding a notion known as slope (in)stability for varieties. Here we describe a special situation in which slope instability for \emph{varieties} (for example of general type) corresponds to a slope instability type condition for certain \emph{bundles}, making the computations almost trivial.
\end{abstract}
\maketitle
\section{Introduction}
Let $X$ be a normal projective algebraic variety over $\C$. For any ample line bundle $L$ on $X$ and any closed subscheme $Z \subset X$ we can form the Hilbert-Samuel polynomial for $k \gg 1$
\[\chi(L^{k}\otimes \I^{k x}_Z) = \alpha_0(x)k^n + \alpha_1(x)k^{n-1} + o(k^{n-1})\]
where $\alpha_i(x), i = 0,1$ are understood as polynomials in $x \in \R$ (while the right hand side only makes sense for $k x \in \N$).

Recall that the Seshadri constant $\epsilon(Z, L)$ is defined as the supremum of rational $c$ for which $L^{k}\otimes \I^{k c}_Z$ is globally generated for all large $k$ such that $k c \in \N$ .

For $c \in (0, \epsilon(Z,L)]$ Ross and Thomas \cite{rt_diff} consider the integral
\[\mu_c(Z,L) = \frac{\int^c_0 \left(\alpha_1(x) + \frac{\alpha'_0(x)}{2}\right)dx}{\int^c_0 \alpha_0(x)dx}\]
and study the inequality
\beq{destab}
\mu_c(Z,L) > \mu(X,L) := \frac{\alpha_1(0)}{\alpha_0(0)}
\eeq
reminiscent of slope instability for sheaves. If the latter inequality holds for some $c \in (0, \epsilon(Z,L)]$ we say that $Z$ slope-destabilises the polarised variety $(X, L)$.

The main consequence of slope instability is K-instability of the polarisation $L$ in the sense of Tian and Donaldson. This is most meaningful when $X$ is smooth, since then the deep results of \cite{chen_tian}, \cite{don_calabi} imply a non-existence result for K\"ahler metrics of constant scalar curvature representing the cohomology class $c_1(L)$. (For a differential-geometric interpretation of slope instability see \cite{twisted}). From an algebro-geometric point of view the main consequence is asymptotic instability, see \cite{rt_alg} Theorem 3.9.

Much progress has been made in understanding slope instability in special situations. For example it has been proved recently by Ross and Panov that exceptional divisors of high genus always slope-destabilises suitable polarisations on smooth algebraic surfaces \cite{dima} (a special case of this had been used previously by Ross to give the first example of an asymptotically unstable general type surface~\cite{ross}).\\

However in general the integral $\mu_c(Z,L)$ is hard to compute, making it difficult to find destabilising subschemes in a systematic way. We are interested in special situations in which one can prove inequality \eqref{destab} without having to evaluate the integral explicitly. In this paper we describe one such situation. More precisely we find a class of varieties for which (Ross-Thomas, nonlinear) slope instability corresponds to (a suitable, linear modification of) slope instability of certain bundles, making the computations almost trivial.\\

Let $E$ be a locally free sheaf over a smooth projective curve $C$ of genus $g$. We assume that the relative Serre line bundle $\mathcal{O}_{\PP(E)}(1)$ on the projective bundle $\pi\!:\PP(E)\to C$ of \emph{lines} is globally generated. For $m \geq 2$ let $X$ be a normal element of the linear system $|\mathcal{O}_{\PP(E)}(m)|$ (more generally let $X$ be a normal complete intersection of elements of $|\mathcal{O}_{\PP(E)}(m_i)|$, $i = 1, \dots, r$). We will see that for any ample $\Q$-divisor $A$ on $C$ the $\Q$-line bundle
\[\mathcal{L}_A = (\mathcal{O}_{\PP(E)}(1)\otimes\pi^* A)|_X\]
is ample. Let $F \subset E$ be a locally free subsheaf. We wish to study the slope (in)stability of the polarised variety $(X, \mathcal{L}_A)$ with respect to the closed subscheme $\PP(F)\cap X \subset X$ (the scheme-theoretic intersection; we assume this is not all of~$X$).

We will do this by considering a test configuration $\X$ for $X$ and proving it is an equivariant contraction of degeneration to the normal cone $\widehat{\X}$ of $\PP(F)\cap X$ inside $X$. We will then compute the (sign of the) Donaldson-Futaki invariant for $\X$ with $A \approx 0$ and relate it to the integral $\mu_{c}(Z, \mathcal{L}_A)$ for $c \approx 1$ thanks to an important estimate of Ross and Thomas. (We further take asymptotics for $g = g(C) \gg 1$ keeping $|\mu(F)|, |\mu(E)|$ bounded, and explain how this corresponds to taking the limit near $C$ ``infinitely special"). The undefined (but standard) terminology above will be briefly recalled at the beginning of section \ref{main_section}.

Let $Q = E/F$. The degeneration $\X$ takes $X \subset \PP(E)$ to a cone over $\PP(F)\cap X $ inside $\PP(F\oplus Q)$ and can be described as follows. Let $\xi \in \Ext^1(Q, F)$ define the extension class of $E$. Scaling the fibres of $F$ with weight $1$ gives a family of extension classes $t\cdot\xi$ for $t \in \C$ and defines a flat family $\mathscr{E} \to \C$ with $\mathscr{E}_t \cong E$ for $t \neq 0$, $\mathscr{E}_0 \cong F\oplus Q$. Projectivising this gives a family $\PP(\E)\to\C$ with a natural action of $\C^*$ which preserves the projection to $\C$. Our variety $X$ can be embedded in the fibre $\PP(\E)_1$ by choosing a form $\varphi \in H^0(S^m \E^*_1)$. We then define $\X$ as the flat closure of the family over $\C\setminus\{0\}$ given by the images of $X \subset \PP(\E)_1$ under $\C^*$. The central fibre $\X_0$ is defined by the form $\varphi|_F := \im(\varphi)$ under the map $S^m E^* \to S^m F^*$, but regarding now $S^m F^*$ as a direct summand of $S^m (F \oplus Q)^*$. Therefore fibrewise over $C$ it is the cone from $\PP(0\oplus Q)$ to $\PP(F)\cap X$ regarded as a closed subscheme of $\PP(\E)_0 \cong \PP(F\oplus Q)$. The same construction goes through when $X$ is a complete intersection; when $X$ is sufficiently general (with respect to $F$) the central fibre $\X_0$ is again a complete intersection.

We give a criterion for the Donaldson-Futaki invariant of $\X$ to be negative. In turn this gives a criterion for the subscheme $\PP(F)\cap X$ to slope-destabilise. The main point here is that computing the Donaldson-Futaki invariant for $\X$ is considerably easier than computing the integral $\mu_{c}(\PP(F)\cap X, L)$, but we can use a result of Ross and Thomas to give a lower bound on the latter.
\begin{defn} Let $F$ be a locally free sheaf on the curve $C$, $r$ a positive integer with $\rk(F) > r$. We define a modified slope function by \[\mu^r(F) = \frac{\deg(F)}{\rk(F)-r}.\]
\end{defn}
\begin{thm}\label{main} Let $E$ be a locally free sheaf on the curve $C$ with $\mathcal{O}_{\PP(E)}(1)$ globally generated, $F \subset E$ a subbundle with $\rk(F) > r$ and
\[\mu^r(F) > \mu^r(E).\]
If $X$ is a general (with respect to $F$) codimension $r$ normal complete intersection of powers of $\mathcal{O}_{\PP(E)}(1)$, then the subscheme $\PP(F) \cap X$ slope-destabilises $(X, \mathcal{L}_A)$ for $A \approx 0$, $c \approx 1$, provided we can take the genus $g$ of $C$ sufficiently large.
\end{thm}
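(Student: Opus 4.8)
The plan is to compute the \emph{sign} of the Donaldson--Futaki invariant of the test configuration $\X$ directly, bypassing the integral $\mu_c$ entirely, and then to invoke the Ross--Thomas comparison to transfer the conclusion back to slope instability of $(X,\mathcal{L}_A)$. First I would check that the flat closure $\X$ is a genuine $\C^*$-test configuration for $(X,\mathcal{L}_A)$: flatness over $\C$ holds by construction, the action scaling the fibres of $F$ lifts to the polarisation $\mathcal{O}_{\PP(\E)}(1)\otimes\pi^*A$, and---using the generality of $X$ with respect to $F$---the central fibre $\X_0$ is a complete intersection of the same multidegree $(m_1,\dots,m_r)$ inside $\PP(F\oplus Q)$, cut out by the images $\varphi_i|_F$. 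This is exactly the cone from $\PP(0\oplus Q)$ to $\PP(F)\cap X$, i.e.\ an equivariant contraction of the deformation to the normal cone $\widehat{\X}$ of $Z=\PP(F)\cap X$, so that the Ross--Thomas machinery applies with degeneration parameter $c\approx 1$.

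Next comes the core computation, namely the two leading Hilbert and weight coefficients of the central fibre. Writing $d(k)=\dim H^0(\X_0,\mathcal{L}_A^{\otimes k})$ and $w(k)$ for the total weight of the induced $\C^*$-action, I would resolve $\mathcal{O}_{\X_0}$ by the Koszul complex of $(\varphi_1|_F,\dots,\varphi_r|_F)$ on $\PP(F\oplus Q)$, twist by $\mathcal{O}(k)\otimes\pi^*A$, and push forward to $C$. Since $\pi_*\mathcal{O}_{\PP(F\oplus Q)}(k)=S^k(F\oplus Q)^*=\bigoplus_{i+j=k}S^iF^*\otimes S^jQ^*$, and the $\C^*$-weight on the summand $S^iF^*\otimes S^jQ^*$ is $-i$, both $d(k)$ and $w(k)$ reduce to alternating sums of Euler characteristics on $C$ of bundles $S^iF^*\otimes S^jQ^*\otimes A^{\otimes k}$ (with the shifts coming from the Koszul terms). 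Riemann--Roch on the curve then yields $d(k)=a_0k^n+a_1k^{n-1}+\cdots$ and $w(k)=b_0k^{n+1}+b_1k^{n}+\cdots$ with $n=\rk E-r$, the coefficients being explicit in $\deg F,\deg Q,\rk F,\rk Q,g,\deg A$ and the $m_i$.

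I would then assemble the Donaldson--Futaki invariant, which in the Ross--Thomas normalisation is a positive multiple of $b_1a_0-b_0a_1$, and examine its sign in the double limit $\deg A\to 0$, $g\to\infty$ with $\mu(F),\mu(E)$ held bounded. The bookkeeping reason the \emph{modified} slope appears is dimensional: $\dim X=\rk E-r$ and $\dim Z=\rk F-r$, so the normalising denominators entering the leading $g$-asymptotics are $\rk E-r$ and $\rk F-r$ rather than the ranks themselves. The expected outcome is that, to leading order in $g$, the combination $b_1a_0-b_0a_1$ is a negative multiple of $\mu^r(F)-\mu^r(E)$; hence $\mu^r(F)>\mu^r(E)$ forces $\mathrm{DF}(\X)<0$ once $g$ is large enough.

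Finally, to convert this into slope instability I would apply the Ross--Thomas estimate comparing the contraction $\X$ with the full deformation to the normal cone: it provides a lower bound for $\mu_c(Z,\mathcal{L}_A)-\mu(X,\mathcal{L}_A)$ in terms of $-\mathrm{DF}(\X)$ at $c\approx 1$, valid once $c\le\epsilon(Z,\mathcal{L}_A)$, which is guaranteed for $A\approx 0$ because $\mathcal{O}_{\PP(E)}(1)\otimes\I_{\PP(F)}$ is globally generated by the fibrewise sections $Q^*\subset E^*$, forcing $\epsilon(Z,\mathcal{L}_A)\ge 1$ in the limit. Thus $\mathrm{DF}(\X)<0$ yields $\mu_c(Z,\mathcal{L}_A)>\mu(X,\mathcal{L}_A)$, i.e.\ $\PP(F)\cap X$ slope-destabilises. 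I expect the principal obstacle to be the second step: carrying the Koszul-plus-Riemann--Roch computation far enough to control \emph{both} subleading coefficients $a_1$ and $b_1$, not merely the leading ones, since the destabilising sign lives in $b_1a_0-b_0a_1$ and only emerges cleanly after the $g\to\infty$ asymptotics; a secondary subtlety is checking that the Ross--Thomas contraction estimate remains strict enough near $c=1$ to preserve the inequality.
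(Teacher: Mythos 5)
Your proposal is correct and follows essentially the same route as the paper: the same test configuration $\X$ with complete-intersection central fibre cut out by the $\varphi_i|_F$, the same Koszul-resolution-plus-Riemann--Roch computation of the dimension and weight coefficients after pushing forward to $C$, the sign extracted from the $g\to\infty$ asymptotics with $\mu(E),\mu(F)$ bounded (the paper organises this slightly more slickly as the exact derivative $\partial_g\mathcal{F}(\X)$, a positive constant times $\mu^r(E)-\mu^r(F)$, since $a_0,b_0$ are $g$-independent and $\mathcal{F}$ is linear in $g$), and the same transfer to slope instability via the Ross--Thomas comparison (\cite{rt_alg} Proposition 5.1 inside Lemma \ref{comparison}, then Theorem \ref{rt_thm}). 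Two blemishes, neither fatal: your sign conventions are internally flipped relative to Donaldson's normalisation (with weights $-i$ and your labels, the destabilising condition should be that your combination $b_1a_0-b_0a_1$ is \emph{positive}), though the two slips cancel to give the right conclusion; and your justification that $\epsilon(Z,\mathcal{L}_A)\ge 1$ is shaky, since $Q^*$ is a \emph{subsheaf}, not a quotient, of the globally generated $E^*$, so its fibrewise sections need not be global --- the paper sidesteps any explicit Seshadri estimate by proving $\lim_{c\to 1}\mathcal{F}_c(\widehat{\X})\le\mathcal{F}(\X)$ and invoking continuity of the Donaldson--Futaki invariant in $c$.
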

More precisely if $X$ is a complete intersection of elements of $\mathcal{O}_{\PP(E)}(m_i)$ for $i = 1, \dots, r$ we require $g \geq g_0(r,\max_i m_i, \rk(F), \rk(E), |\mu(E)|, |\mu(F)|)$. In particular we should be allowed to let $g \to \infty$ while keeping $\mathcal{O}_{\PP(E)}(1)$ globally generated with a uniform bound on $|\deg(E)|$. Thus the geometric meaning of this condition is that we need to take $C$ arbitrarily special (think of the simple case $E = \mathcal{O}^{\oplus e-1}\oplus L$ with $L$ globally generated and a uniform bound on $\deg L$).\\

It is straightforward to use this result to produce many examples of slope unstable polarised manifolds; simply consider a relative complete intersection inside the projectivisation of a vector bundle whose dual is globally generated over a very special curve and with a subbundle satisfying the above slope instability condition. In particular it becomes almost trivial to produce a wealth of \mbox{K-unstable} general type polarised manifolds (compare with the ``folklore conjecture" disproved in \cite{ross}).

In the last section we concentrate on one example which is perhaps less obvious: we will use Theorem \ref{main} to describe a class of slope unstable blowups of ruled surfaces  (without holomorphic vector fields) for polarisations which differ from those of \cite{rt_diff} Corollary 5.29. Our trick is to regard these surfaces as conic bundles.\\

Finally we observe that the only examples of K\"ahler classes without constant scalar curvature representatives on a general type manifold are obtained via slope instability. We believe it would be quite interesting to find K-unstable, slope stable polarisations on a general type manifold. Note that while proving K-stability directly is essentially impossible, by the work of Ross and Panov \cite{dima} proving slope stability on a surface of general type seems almost within reach, since we should only test against rather special divisors. (We must mention in this connection the fundamental inequalities of Chen \cite{chen} and Song-Weinkove \cite{ben}).\\
\noindent\textbf{Notation.} Throughout the paper we mix the additive and multiplicative notation for line bundles. We denote by $\Tr$ the total weight of a representation of $\C^*$. A typical space of sections over a scheme $Y$ is denoted by $H^0_{Y}$ and its dimension by $h^0_Y$.\\
\noindent\textbf{Acknowledgements.} We thank J. Ross, R. Thomas and G. Sz\'ekelyhidi for their comments on a preliminary version of this paper. The first author is grateful to the Mathematics Department of Pavia University for kind hospitality.

\section{Main result}\label{main_section}
The basic object in algebraic K-stability is a \emph{test configuration}. This is roughly a $\C^*$-equivariant flat family $\Y$ over $\C$ with a fibrewise ample line bundle $\mathscr{L}$ on it, and so a 1-parameter degeneration of the polarised variety $(Y, L)=(\Y_1, \mathscr{L}_1)$ to the polarised scheme $(\Y_0, \mathscr{L}_0)$ (we will not give a precise definition here but see e.g. \cite{rt_alg}).

A test configuration $\Y$ induces an action of $\C^*$ on the central fibre $\Y_0$, and by (equivariant) Riemann-Roch one can expand the dimension and total $\C^*$-weight of the spaces of sections as
\begin{align*}
h^0_{\Y_0}(\mathscr{L}^k_0) = b_0 k^n + b_1 k^{n-1} + o(k^{n-1}),\\
\Tr H^0_{\Y_0}(\mathscr{L}^k_0) = a_0 k^{n+1} + a_1 k^{n} + o(k^{n-1}).
\end{align*}
where $n = \dim Y$. The rational number
\[\mathcal{F}(\Y) = a_0 b_1 - a_1 b_0\]
is called the \emph{Donaldson-Futaki invariant} of $\Y$, and $\Y$ \emph{K-destabilises} if $\mathcal{F}(\Y) < 0$.\\

Let $Z \subset Y$ be a closed subscheme. The \emph{degeneration to the normal cone} is a special test configuration $\Y$ for $Y$ given by the blowup $\operatorname{Bl}_{Z \times\{0\}} Y \times \C$ lifting the natural action of $\C^*$ on the second factor of $Y \times \C$. This is polarised by $L - c E$ where $E$ denotes the exceptional divisor and $c$ is positive, rational and less than the Seshadri constant of $Z$ with respect to the ample line bundle $L$ on $Y$ (which we also identify with its pullback). Ross and Thomas proved
\begin{thm}[\cite{rt_diff} Theorem 4.3]\label{rt_thm} The inequality \eqref{destab} holds if and only if the degeneration to the normal cone K-destabilises (namely its Donaldson-Futaki invariant $\mathcal{F}_c(\Y)$ is negative).
\end{thm}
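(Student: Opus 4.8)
The plan is to compute the Donaldson--Futaki invariant $\mathcal{F}_c(\Y)$ of the degeneration to the normal cone directly from the two expansions that define it, and then to recognise the answer as a positive multiple of the difference between the two sides of \eqref{destab}. Writing $\Y=\operatorname{Bl}_{Z\times\{0\}}(Y\times\C)$ with polarisation $\mathcal{L}=L-cE$ and $n=\dim Y$, and letting $\alpha_0,\alpha_1$ be the coefficients of the Hilbert--Samuel expansion of $\chi(L^k\otimes\I_Z^{kx})$ as in the introduction, I must produce the four coefficients $b_0,b_1$ (dimension) and $a_0,a_1$ (total weight) and then form $a_0b_1-a_1b_0$.

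First the dimension coefficients, which are immediate. Since $\Y$ is flat over $\C$ and $\mathcal{L}$ is relatively ample, for $k\gg1$ the Euler characteristic is constant along the family, so $h^0_{\Y_0}(\mathcal{L}_0^k)=\chi(\Y_0,\mathcal{L}_0^k)=\chi(Y,L^k)=h^0(Y,L^k)$, the generic fibre being $(Y,L)$ untouched by the blow-up. Hence $b_0=\alpha_0(0)$ and $b_1=\alpha_1(0)$, which already reproduces the denominator and the quantity $\alpha_1(0)/\alpha_0(0)$ appearing on the right of \eqref{destab}.

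The weights are the real content. I would identify the weight decomposition of $H^0_{\Y_0}(\mathcal{L}_0^k)$ as follows: pushing $\mathcal{O}(-kcE)$ down to $Y\times\C$ gives the ideal power $\I_{Z\times\{0\}}^{kc}=\sum_{b\ge0}t^b\,\I_Z^{\max(kc-b,\,0)}$, where $t$ is the coordinate on $\C$ carrying the $\C^*$-weight. Restricting to the fibre over $0$ and grading by the power of $t$ identifies the weight-$b$ piece with $H^0(L^k\otimes\I_Z^{\max(kc-b,0)})/H^0(L^k\otimes\I_Z^{\max(kc-b+1,0)})$, once the usual flatness and vanishing are in place for $k\gg1$ with $kc\in\N$. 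Summing over $b$ then telescopes back to $h^0(L^k)$, while Abel summation yields the total weight
\[
w(k)=kc\,h^0(L^k)-\sum_{j=1}^{kc}h^0(L^k\otimes\I_Z^j).
\]

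It remains to insert $h^0(L^k\otimes\I_Z^j)=\alpha_0(j/k)k^n+\alpha_1(j/k)k^{n-1}+o(k^{n-1})$ and convert the sum to an integral. Here the Euler--Maclaurin correction is the delicate point: $\sum_{j=1}^{kc}\alpha_0(j/k)=k\int_0^c\alpha_0\,dx+\tfrac12(\alpha_0(c)-\alpha_0(0))+O(1/k)$, and since $\alpha_0(c)-\alpha_0(0)=\int_0^c\alpha_0'\,dx$ this produces exactly the term $\tfrac12\alpha_0'$ built into the definition of $\mu_c$. Reading off coefficients gives $a_0=c\,\alpha_0(0)-\int_0^c\alpha_0\,dx$ and $a_1=c\,\alpha_1(0)-\int_0^c(\alpha_1+\tfrac12\alpha_0')\,dx$, and a short cancellation leaves
\[
\mathcal{F}_c(\Y)=a_0b_1-a_1b_0=\pm\Big(\alpha_1(0)\!\int_0^c\!\alpha_0\,dx-\alpha_0(0)\!\int_0^c(\alpha_1+\tfrac12\alpha_0')\,dx\Big),
\]
the overall sign being fixed by the convention for the $\C^*$-weight on $t$. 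Since $\alpha_0(0)>0$ and $\int_0^c\alpha_0\,dx>0$, clearing these positive denominators shows that $\mathcal{F}_c(\Y)<0$ is equivalent to $\mu_c(Z,L)>\mu(X,L)$, i.e.\ to \eqref{destab}. The hard part will be the weight-space identification: justifying it rigorously requires care with the scheme structure of the possibly non-reduced, reducible central fibre, with the vanishing of higher cohomology and the passage between $\chi$ and $h^0$ for $k\gg1$, and with the restriction to $k$ such that $kc\in\N$; fixing the weight convention so that $\mathcal{F}_c<0$ is the destabilising sign is then the only remaining subtlety.
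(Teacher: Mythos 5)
The paper does not actually prove this statement: it is imported verbatim from Ross--Thomas (\cite{rt_diff}, Theorem 4.3), so there is no internal proof to compare against, and the right benchmark is the original argument of \cite{rt_diff}, Section 4. Your proposal is a correct reconstruction of exactly that argument: the identification $p_*\mathcal{O}(-kcE)$ with $\I_{Z\times\{0\}}^{kc}=\sum_{b\geq 0}t^b\,\I_Z^{\max(kc-b,0)}$, the grading of $H^0_{\Y_0}(\mathcal{L}_0^k)$ by powers of $t$, the Abel summation (which I checked, including the endpoints: with your positive-weight convention it gives precisely $w(k)=kc\,h^0(L^k)-\sum_{j=1}^{kc}h^0(L^k\otimes\I_Z^j)$), and the trapezoid correction $\tfrac12(\alpha_0(c)-\alpha_0(0))=\tfrac12\int_0^c\alpha_0'\,dx$ producing the $\tfrac12\alpha_0'$ in $\mu_c$ are all the mechanism of \cite{rt_diff}. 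Two remarks. First, the $\pm$ hedge is removable: with the convention this paper uses elsewhere (weights $-i$, cf.\ $\Tr_{\PP}(k)=-\sum_i i\dim W_{-i}$ in Section 2) one gets $a_0=\int_0^c\alpha_0\,dx-c\,\alpha_0(0)$, $a_1=\int_0^c\bigl(\alpha_1+\tfrac12\alpha_0'\bigr)dx-c\,\alpha_1(0)$, hence $\mathcal{F}_c(\Y)=\alpha_1(0)\int_0^c\alpha_0\,dx-\alpha_0(0)\int_0^c\bigl(\alpha_1+\tfrac12\alpha_0'\bigr)dx$, which is negative exactly when \eqref{destab} holds. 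Second, the steps you flag as ``the hard part'' are genuinely the technical content of \cite{rt_diff} Theorem 4.2: the hypothesis $c\leq\epsilon(Z,L)$ is what guarantees both the higher cohomology vanishing (so $h^0=\chi$ and restriction to the central fibre is onto for $k\gg0$, $kc\in\N$) and that the discrepancy between $\I^{kc}_{Z\times\{0\}}$ and its integral closure/saturation does not pollute the two leading coefficients; in a self-contained write-up these would still have to be supplied, but they are exactly what the cited theorem provides.
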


Let us now go back to the setup described in the Introduction. Recall we are assuming $\mathcal{O}_{\PP(E)}(1)$ is globally generated, hence nef. A standard application of the Nakai-Moishezon criterion and Kleiman's theorem then shows that for any ample $\mathbb{Q}$-divisor $A$ on $C$ the $\mathbb{Q}$-line bundle $\mathcal{L}_A$ is ample. A general complete intersection $X$ of powers of $\mathcal{O}_{\PP(E)}(1)$ is smooth; it is enough for our purposes to take $X$ normal. Its codimension will be denoted by $r$.

Starting with $F \subset E$ a subbundle we form the degeneration $\X$ described in the Introduction. We wish to relate its Donaldson-Futaki invariant to that of the degeneration to the normal cone of $\PP(F)\cap X \subset X$, which we denote by $\widehat{\X}$.
\begin{lem}\label{comparison} Suppose that $\mathcal{F}(\X)$ is negative. Then $\mathcal{F}_c(\widehat{\X})$ is also negative for $c \approx 1$.
\end{lem}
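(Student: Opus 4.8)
The plan is to place both test configurations over a common base and to compare their Donaldson--Futaki invariants by letting $c$ tend to the Seshadri value. First I would make precise the claim announced in the Introduction that $\X$ is an equivariant contraction of $\widehat{\X}$, i.e.\ produce a $\C^*$-equivariant morphism $p\colon\widehat{\X}\to\X$ which is an isomorphism over $\C\setminus\{0\}$ (there both families are simply the $\C^*$-orbit of $X\subset\PP(\E)_1$) and which on the central fibre presents the cone $\X_0$ as the image of $\widehat{\X}_0=\operatorname{Bl}_Z X\cup\PP(N_{Z/X}\oplus\mathcal O)$, contracting one of its two components. The crucial points here are that the Seshadri constant $\epsilon(Z,\mathcal L_A)$ equals $1$ (the fibre lines of $\PP(E)\to C$ meeting $\PP(F)$ give ratio $1$, independently of $A$, since they map to a point of $C$), and that the nef $\Q$-line bundle obtained on $\widehat{\X}$ at $c=1$, namely $\mathcal L_A-E$, is exactly the pullback $p^*\mathcal L_{\X}$ of the fibrewise ample polarisation $\mathcal L_{\X}$ of $\X$. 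This is where the explicit description of $\X$ via scaling the extension class $\xi$ enters.

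Granting this, I would show that the Donaldson--Futaki invariant of $\widehat{\X}$ evaluated at $c=1$ coincides with that of $\X$. Since $p$ is the morphism attached to a power of $\mathcal L_A-E=p^*\mathcal L_{\X}$, the projection formula together with $p_*\mathcal O_{\widehat{\X}}=\mathcal O_{\X}$ and the vanishing $R^{i}p_*\big((\mathcal L_A-E)^{k}\big)=0$ for $i>0$, $k\gg 1$, yields a $\C^*$-equivariant isomorphism
\[
H^0_{\widehat{\X}_0}\big((\mathcal L_A-E)^{k}\big)\cong H^0_{\X_0}\big(\mathcal L_{\X}^{\,k}\big)
\]
for all large $k$. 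Equivariance forces the $\C^*$-weights to match as well, so the dimension data $(b_0,b_1)$ and the weight data $(a_0,a_1)$ read off from $\widehat{\X}_0$ at $c=1$ agree with those read off from $\X_0$; hence $\mathcal F_1(\widehat{\X})=\mathcal F(\X)$ (at worst they agree up to a fixed positive factor, which is all that matters for the sign).

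I would then finish by an openness argument. The coefficients $a_i(c),b_i(c)$ defining $\mathcal F_c(\widehat{\X})$ arise by integrating over $[0,c]$ the Hilbert--Samuel coefficients $\alpha_0(x),\alpha_1(x)$ of $Z$ with respect to $\mathcal L_A$, and these are polynomial in $x$ throughout the range $x\in[0,\epsilon(Z,\mathcal L_A)]=[0,1]$; consequently $\mathcal F_c(\widehat{\X})=a_0(c)b_1(c)-a_1(c)b_0(c)$ is a polynomial, hence continuous, function of $c$ up to and including $c=1$. Since its value there equals $\mathcal F(\X)<0$, it remains strictly negative for $c$ in a left neighbourhood of $1$, which is precisely the assertion for $c\approx 1$.

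The main obstacle is the second step: establishing that $p$ genuinely preserves equivariant cohomology, i.e.\ the vanishing $R^{i}p_*\big((\mathcal L_A-E)^{k}\big)=0$ together with $p_*\mathcal O_{\widehat{\X}}=\mathcal O_{\X}$ and the exact matching of $\C^*$-weights. This demands an explicit understanding of the locus of $\widehat{\X}_0$ collapsed by $p$ and control of the cohomology of its fibres, and it is here that the cone structure of $\X_0$ over $\PP(F)\cap X$ inside $\PP(F\oplus Q)$, and the compatibility of the weight-one scaling of $F$ with the blow-up of $Z\times\{0\}$, must be used in earnest. A secondary point is to confirm $\epsilon(Z,\mathcal L_A)=1$ for $A\approx 0$, so that the contraction realising the equality is indeed the one occurring at $c\approx 1$.
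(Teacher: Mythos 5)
Your overall architecture is the same as the paper's: realise $\X$ as an equivariant contraction of the degeneration to the normal cone $\widehat{\X}$ (the paper does this via the semi-ample bundle $\mathcal{O}_{\PP(E)}(1)-p^*[\PP(F)]$ on $\operatorname{Bl}_{\PP(F)\times\{0\}}\PP(E)\times\C$, citing \cite{rt_diff} Remark 5.14), compare the two invariants as $c\to 1$, and conclude by continuity of $\mathcal{F}_c(\widehat{\X})$ in $c$. Your first and third steps are fine; for the third one only needs $\epsilon(\PP(F)\cap X,\mathcal{L}_A)\geq 1$, which follows from semi-ampleness of $\mathcal{O}_{\widehat{\X}}(1)$, so your claim that the Seshadri constant \emph{equals} $1$ is unnecessary.

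The genuine gap is your second step, the one you yourself flag as the main obstacle: the exact equality $\mathcal{F}_1(\widehat{\X})=\mathcal{F}(\X)$ deduced from $p_*\mathcal{O}_{\widehat{\X}}=\mathcal{O}_{\X}$, higher direct image vanishing and equivariant matching of sections. The hypothesis $p_*\mathcal{O}_{\widehat{\X}}=\mathcal{O}_{\X}$ is essentially a normality statement about the total space $\X$, and it is exactly what fails here: $\X$ is a flat closure whose central fibre is a fibrewise cone over $\PP(F)\cap X$, and the paper explicitly warns that this scheme is not normal ``most of the times''. When normality fails, sections of $\mathcal{O}_{\widehat{\X}}(k)$ compute sections on the normalisation rather than on $\X$, and the total weights $\Tr(H^0_{\X_0}(\mathcal{O}_{\X}(k)))$ and $\Tr(H^0_{\widehat{\X}}(\mathcal{O}_{\widehat{\X}}(k))/tH^0_{\widehat{\X}}(\mathcal{O}_{\widehat{\X}}(k)))$ can genuinely disagree at order $k^n$, which perturbs $a_1$ and hence the Donaldson--Futaki invariant; so the equality you assert is unproven and in general false. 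The paper's fix is to use only normality of the \emph{general} fibre $X$: by \cite{rt_alg} Proposition 5.1 there exists $a\geq 0$ with $\Tr(H^0_{\X_0}(\mathcal{O}_{\X}(k)))=\Tr(H^0_{\widehat{\X}}(\mathcal{O}_{\widehat{\X}}(k))/tH^0_{\widehat{\X}}(\mathcal{O}_{\widehat{\X}}(k)))-ak^n+o(k^n)$, giving only the one-sided estimate $\lim_{c\to 1}\mathcal{F}_c(\widehat{\X})\leq\mathcal{F}(\X)$ --- but the correction enters with the favourable sign, so negativity of $\mathcal{F}(\X)$ still propagates and your continuity argument then closes the proof. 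Replace your claimed equality with this inequality; attempting to prove the equality would require normality of $\X$ (or of the cone $\X_0$), which is not available in this setting.
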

\begin{proof} The test configuration $\X$ (with the line bundle $\mathcal{O}_{\X}(1)$ given by $\X \subset \PP(\mathscr{E})$ as in the Introduction) is an equivariant contraction of the degeneration to the normal cone $\widehat{\X}$ of $\PP(F)\cap X$. To see this let \[p\!: \operatorname{Bl}_{\PP(F)\times\{0\}}\PP(E)\times \C \to \PP(E)\times \C\]
and notice that the semi-ample line bundle $\mathcal{O}_{\widehat{\X}}(1) := \mathcal{O}_{\PP(E)}(1) - p^*[\PP(F)]$ defines a morphism which is an equivariant contraction to the test configuration splitting $\PP(E)$ into $\PP(F\oplus Q)$ with weight $1$ action on $F$ (see \cite{rt_diff} remark 5.14). Then the claim for $\X$ follows since it is the image of the flat closure of $X \subset \PP(E)\times \{1\}$ under the $\C^*$-action, which in turn coincides with degeneration to the normal cone of $\PP(F)\cap X$.

Suppose now that $\mathcal{F}(\X) < 0$. Then since the general fibre $X$ of $\X$ is normal \cite{rt_alg} Proposition 5.1 shows that there exists $a \geq 0$ such that (denoting by $t$ a coordinate on the base $\C$)
\[\Tr(H^0_{\X_0}(\mathcal{O}_{\X}(k))) = \Tr(H^0_{\widehat{\X}}(\mathcal{O}_{\widehat{\X}}(k))/tH^0_{\widehat{\X}}(\mathcal{O}_{\widehat{\X}}(k))) - a k^n + o(k^n).\]
This implies in particular
\[\lim_{c \to 1} \mathcal{F}_c(\widehat{\X}) \leq \mathcal{F}(\X) < 0,\]
and by continuity of the Donaldson-Futaki invariant in the parameter $c$ we obtain $\mathcal{F}_c(\widehat{\X}) < 0$ for $c \approx 1$ as required.
\end{proof}
The rest of this section is devoted to the proof of Theorem \ref{main}. More precisely we compute the derivative $\partial_g \mathcal{F}(\X)$ and show it equals a positive constant times by the difference of slopes $\mu^r(F) - \mu^r(E)$. In view of Lemma \ref{comparison} and Theorem \ref{rt_thm} this will complete the proof of our main result.\\

Note that by the definition of the Donaldson-Futaki invariant we may replace $X$ with the central fibre of $\X_0$ and assume that $E$ splits as $F \oplus Q$ to start with (this new $X$ will not be normal anymore most of the times, but generically for a fixed $F$ it will still be a codimension $r$ complete intersection). Moreover we will assume that $\mathcal{O}_{\PP(E)}(1)$ is ample to start with and set $A = 0$. When $\mathcal{O}_{\PP(E)}(1)$ is not ample our computations below only give the limit $\lim_{A \to 0} \mathcal{F}(\X)$; by continuity of the Donaldson-Futaki invariant in the parameter $A$ this is clearly enough for our purposes.\\

Write $e,f,q$ for the ranks of the bundles $E, F, Q$ respectively. According to the definition we need to expand
\begin{align*}
h^0_X(k) &:= h^0_X(\mathcal{O}_X(k)) = b_0 k^{e-r} + b_1k^{e-r-1} + o(k^{e-r-1}),\\
\Tr_X(k) &:=\Tr H^0_X(\mathcal{O}_X(k)) = a_0 k^{e- r + 1} + a_1 k^{e-r} + o(k^{e-r}).
\end{align*}
To do this we will first find the corresponding expansions for $h^0_{\PP(E)}(\mathcal{O}_{\PP(E)}(k))$ and $\Tr H^0_{\PP(E)}(\mathcal{O}_{\PP(E)}(k))$. We will then use the Koszul resolution of $\mathcal{O}_X$ to write the above polynomials for $X$ as certain linear combinations of shifts of the analogous polynomials for $\PP(E)$. We start by pushing forward to $C$, so by Riemann-Roch we compute
\begin{align}
h^0_{\PP}(k) &:= h^0(\mathcal{O}_{\PP(E)}(k)) = \binom{e -1 + k}{e-1}(-k\mu(E) + 1-g)\\
&\sim -\frac{\mu(E)}{(e-1)!}k^{e} + \left(\frac{1-g}{(e-1)!} - \frac{e\mu(E)}{2(e-2)!}\right)k^{e-1} + o(k^{e-1}).\nonumber
\end{align}
Writing
\[H^0(\mathcal{O}_{\PP(E)}(k)) = \bigoplus^k_{i=0} W_{-i}\]
for the decomposition into semi-invariant spaces of weight $-i$ we have
\[\Tr_{\PP}(k) := \Tr H^0(\mathcal{O}_{\PP(E)}(k)) = -\sum^k_{i = 0}i \dim W_{-i}\]
where
\[W_{-i} = H^0(S^i F^* \otimes S^{k-i} Q^*),\]
and as before Riemann-Roch gives
\[
\dim W_{-i} = \binom{f-1 + i}{f-1}\binom{q-1 + k-i}{q-1}(-i \mu(F) - (k-i)\mu(Q) + 1-g).\]
An elementary way to compute the trace is by absorption, starting from
\begin{align*}
\sum^k_{i=0} i \binom{f-1 + i}{f-1}\binom{q-1 + k-i}{q-1} &= f \sum^{k-1}_{j=0} \binom{f + j}{f}\binom{q-1 + k-1-j}{q-1}\\
&= f \binom{e + k-1}{e} \sim \frac{f}{e!}k^e + \frac{f}{2(e-2)!}k^{e-1} + o(k^{e-1}),
\end{align*}
which gives in turn
\begin{align*}
\sum^k_{i=0} i^2 \binom{f-1 + i}{f-1}&\binom{q-1 + k-i}{q-1} = \\
&= \sum^k_{i=0} i(i-1)\binom{f-1 + i}{f-1}\binom{q-1 + k-i}{q-1} + f \binom{e + k-1}{e}\\
&= f(f+1)\sum^{k-2}_{j=0}\binom{f+1 + j}{f+1}\binom{q-1 + k-2-j}{q-1} + f \binom{e + k-1}{e}\\
&= f(f+1)\binom{e + 1 + k-2}{e+1} + f \binom{e + k-1}{e}\\
&\sim \frac{f(f+1)}{(e+1)!}k^{e+1} + \frac{f(f+1)(e-2)+ 2f}{2 e!}k^e + o(k^{e}).
\end{align*}
Similarly
\begin{align*}
\sum^k_{i=0}i(k-i)\binom{f-1 + i}{f-1}\binom{q-1 + k-i}{q-1} &= f q\binom{e+1 + k - 2}{e+1}\\
&\sim \frac{f q}{(e+1)!}k^{e+1} + \frac{f q (e-2)}{2 e!}k^{e} + o(k^e).
\end{align*}
Thus we find
\begin{align}
\Tr_{\PP}(k) &\sim f\left(\frac{(f+1)\mu(F)}{(e+1)!} + \frac{q\mu(Q)}{(e+1)!}\right)k^{e+1}\\
& + f\left(\frac{(e(f+1)-2f)\mu(F)}{2 e!} + \frac{q(e-2)\mu(Q)}{2 e!} - \frac{(1-g)}{e!}\right)k^e + o(k^e).\nonumber
\end{align}
Suppose now that $X$ is a complete intersection of divisors in $|\mathcal{O}_{\PP(E)}(m_i)|$ for $i = 1, \dots, r$. By the Koszul resolution of $\mathcal{O}_X$ we can compute
\[h^0_X(k) = \sum^r_{s = 0} (-1)^s \sum_{1 \leq i_1 < i_2 < \dots < i_s \leq r} h^0_{\PP}(k - m_{i_1} - \dots - m_{i_s})\]
and similarly, taking into account the shift in the $\Z$-grading given by the \mbox{$\C^*$-action},
\begin{align*}
\Tr_{X}(k) &= \sum^r_{s = 0} (-1)^s \sum_{1 \leq i_1 < i_2 < \dots < i_s \leq r} \Tr_{\PP}(k - m_{i_1} - \dots - m_{i_s})\\& - \sum^r_{s = 0} (-1)^s \sum_{1 \leq i_1 < i_2 < \dots < i_s \leq r} (m_{i_1} + \dots + m_{i_s})h^0_{\PP}(k - m_{i_1} - \dots - m_{i_s}).
\end{align*}
We will need the following combinatorial identities (easily proved by induction) for a polynomial $p(x) = \alpha x^n + o(x^n)$,
\[\sum^r_{s = 0} (-1)^s \sum_{1 \leq i_1 < i_2 < \dots < i_s \leq r} p(k - m_{i_1} - \dots - m_{i_s}) = \binom{n}{r} r! \alpha \prod^r_{i = 1} m_i x^{n-r} + o(x^{n-r}),\]
\begin{align*}\sum^r_{s = 0} (-1)^s \sum_{1 \leq i_1 < i_2 < \dots < i_s \leq r} (m_{i_1} + \dots + m_{i_s}) &p(k - m_{i_1} - \dots - m_{i_s})=\\&= -\binom{n}{r-1} r! \alpha \prod^r_{i = 1} m_i \,x^{n-r+1} + o(x^{n-r+1}).
\end{align*}
Computing with these identities we find
\begin{align*}
b_0 &= -\binom{e}{r} r! \prod^r_{i = 1} m_i \frac{\mu(E)}{(e-1)!}=-\prod^r_{i = 1} m_i\frac{e\mu(E)}{(e-r)!},\\
a_0 &= \binom{e+1}{r} r! \prod^r_{i = 1} m_i \,f\left(\frac{(f+1)\mu(F)}{(e+1)!} + \frac{q\mu(Q)}{(e+1)!}\right) - \binom{e}{r-1} r! \prod^r_{i = 1} m_i \,\frac{\mu(E)}{(e-1)!}\\
&= \prod^r_{i = 1} m_i \frac{f\mu(f) + e(f-r)\mu(E)}{(e+1-r)!}.
\end{align*}
We now take derivatives with respect to the genus $g$. We apply the same combinatorial identities, the key fact being that the higher order coefficients $a_0, b_0$ do not depend on $g$ and therefore give no further contributions to the lower order terms,
\begin{align*}
\partial_g b_0 &= \partial_g a_0 = 0,\\
\partial_g b_1 &= -\binom{e-1}{r} r! \prod^r_{i = 1} m_i \frac{1}{(e-1)!} = -\prod^r_{i = 1} m_i \frac{1}{(e-1-r)!},\\
\partial_g a_1 &= \binom{e}{r} r! \prod^r_{i = 1} m_i \frac{1}{e!} - \binom{e-1}{r-1} r! \prod^r_{i = 1} m_i \frac{1}{(e-1)!} = \prod^r_{i = 1} m_i \frac{f-r}{(e-r)!}.
\end{align*}
By definition $\mathcal{F}(\X) = a_0 b_1 - a_1 b_0$, so
\begin{align*}
\partial_g \mathcal{F}(\X) &= a_0\partial_g b_1 - \partial_g a_1 b_0\\
&= \left(\prod^r_{i = 1} m_i\right)^2 \frac{(f-r)e \mu(E)-(e-r)f\mu(f)}{(e-r)!(e-r+1)!}\\
&= \left(\prod^r_{i = 1} m_i\right)^2 \frac{\mu^r(E)-\mu^r(F)}{(e-r)!(e-r+1)!}.
\end{align*}
This completes the proof of Theorem \ref{main}.
\section{An example}
In this section we give new examples of slope unstable blowups of ruled surfaces. Our polarisations differ from those of \cite{rt_diff} Corollary 5.29 as the exceptional divisors all have the same large area. Our trick is to regard these blowups as conic bundles.

Let $C$ be a genus $g$ \emph{hyperelliptic} Riemann surface with hyperelliptic divisor $H$ and $D \neq H$ a divisor on $C$ such that $\mathcal{O}_{C}(D)$ is non trivial and globally generated (the same argument would also apply to any Riemann surface with a nontrivial globally generated divisor $H$ of degree less than $\frac{g+2}{3}$, but we only write it down in the hyperelliptic case for simplicity). In particular (since the hyperelliptic divisor is unique) we have $\deg(D) > \deg(H) = 2$. Consider the vector bundle \begin{equation}
E= \mathcal{O}_{C} \oplus \mathcal{O}_{C}(-H) \oplus \mathcal{O}_{C}(-D),
\end{equation}
and the projective bundle $\PP(E)$ over $C$. The linear system $|\mathcal{O}_{\PP(E)}(2)|$ contains a smooth irreducible surface $S$ and the induced map $\pi\!: S \rightarrow C$ is a fibration by degree 2 plane curves, i.e. a conic bundle.
\begin{lem} A generic surface $S \in |\mathcal{O}_{\PP(E)}(2)|$ is the blowup of some ruled surface $\overline{S} \to C$ in $2 \deg(D) + 4$ points.
\end{lem}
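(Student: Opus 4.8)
The plan is to recognise $\pi\!: S \to C$ as a conic bundle, to count its singular fibres as the degree of a discriminant divisor on $C$, and then to contract one component in each singular fibre to exhibit $S$ as a blow-up of a ruled surface. Since the statement only asks for \emph{some} ruled surface $\overline{S} \to C$, I will not try to identify $\overline{S}$ explicitly.

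First I would note that a defining section of $S \in |\mathcal{O}_{\PP(E)}(2)|$ is an element $\varphi \in H^0(C, S^2 E^*)$, i.e. a symmetric morphism $\varphi\!: E \to E^*$, and that the fibre $S_p = \pi^{-1}(p)$ is the conic in $\PP(E_p) \cong \PP^2$ cut out by the quadratic form $\varphi(p)$. Thus $S_p$ is singular exactly where $\det \varphi(p) = 0$. Invariantly $\det \varphi$ is a section of $\Hom(\det E, \det E^*) = (\det E^*)^{\otimes 2}$; since $\det E = \mathcal{O}_C(-H-D)$ we have $\det E^* = \mathcal{O}_C(H+D)$, so $\det \varphi \in H^0(C, \mathcal{O}_C(2H+2D))$. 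As $S$ is irreducible the generic fibre is a smooth conic, so $\det \varphi \not\equiv 0$, and the discriminant divisor $\Delta = \{\det\varphi = 0\} \subset C$ has degree $2\deg(H) + 2\deg(D) = 2\deg(D) + 4$, independently of $g$.

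Next I would show that for generic $S$ the fibration is a \emph{standard} conic bundle: $\Delta$ is reduced and every fibre over $\Delta$ is a reduced nodal conic $\ell_1 \cup \ell_2$ (the form $\varphi(p)$ has rank exactly $2$), so there are exactly $2\deg(D)+4$ distinct singular fibres. Here I would use that $S^2 E^*$ is globally generated, being a direct sum of the globally generated line bundles $\mathcal{O}$, $\mathcal{O}(H)$, $\mathcal{O}(D)$, $\mathcal{O}(2H)$, $\mathcal{O}(H+D)$, $\mathcal{O}(2D)$, and run a Bertini/transversality argument: the symmetric $3\times 3$ forms of rank $\le 1$ form a locus of codimension $3$, so a generic section $\varphi$ over the curve $C$ avoids it, leaving only rank-$2$ degenerations, and generically $\det\varphi$ vanishes to first order at each such point. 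In a nodal fibre the components $\ell_1, \ell_2$ are smooth rational curves meeting transversally at one point; writing the fibre class as $\mathsf{f} = \ell_1 + \ell_2$ and using $\ell_i \cdot \mathsf{f} = 0$ (fibres over distinct points are disjoint and algebraically equivalent) one gets $\ell_i^2 = -1$, so each component is a $(-1)$-curve.

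Finally I would contract one $(-1)$-curve in each of the $2\deg(D)+4$ singular fibres. By Castelnuovo's criterion each contraction preserves smoothness and replaces the singular fibre by a smooth rational fibre; after all the contractions the resulting surface $\overline{S} \to C$ is a conic bundle with no singular fibres, i.e. a $\PP^1$-bundle (ruled surface) over $C$, and $S$ is recovered as its blow-up at the $2\deg(D)+4$ contracted points. The hard part is the genericity step: one must ensure that no fibre degenerates to a double line (rank $1$) and that $\Delta$ is reduced, so that the numerical count $\deg\Delta = 2\deg(D)+4$ coincides with the actual number of distinct singular fibres, each contributing a single blown-up point. This is precisely where the positivity (global generation) of the summands of $E^*$ enters, and it is the only point that requires genuine care rather than a routine Chern-class computation.
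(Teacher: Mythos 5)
Your proof is correct, but it takes a genuinely different route from the paper's. The paper never mentions the discriminant: it computes $\chi(\mathcal{O}_S)=1-g$ and $K_S^2=8(1-g)-2\deg(D)-4$ from the Koszul sequence and adjunction, then counts singular fibres topologically, via $e(S)=2e(C)+\sum_s(e(F_s)-2)$ together with Noether's formula $e(S)=12\chi(\mathcal{O}_S)-K_S^2$, using smoothness of $S$ (not genericity) to rule out double fibres by a local computation; the $(-1)$-curve conclusion is then drawn from $D_i^2<0$ (Beauville VIII.3) plus $F^2=0$, essentially the same numerics as your $\ell_i\cdot\mathsf{f}=0$, $\ell_1\cdot\ell_2=1$ argument. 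Your approach instead reads the number off as $\deg\bigl((\det E^*)^{\otimes 2}\bigr)=2\deg(D)+4$, which is conceptually more transparent about where the count comes from and generalises at once to conic bundles $\varphi\colon E\to E^*$ over any curve; the price is exactly the genericity step you flag, and your sketch of it is sound: global generation of $S^2E^*$ (each summand $\mathcal{O}$, $\mathcal{O}(H)$, $\mathcal{O}(D)$, etc.\ is globally generated, $H$ being the hyperelliptic pencil) makes the evaluation map $H^0(S^2E^*)\times C\to \operatorname{Tot}(S^2E^*)$ a submersion, so parametric transversality both keeps a generic $\varphi$ off the codimension-$3$ rank-$\le 1$ cone and forces $\det\varphi$ to vanish simply, i.e.\ $\Delta$ reduced. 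Two small points worth noting: the paper's topological count is valid for \emph{every} smooth irreducible member, which retroactively shows your $\Delta$ is automatically reduced whenever $S$ is smooth, so your Bertini step, while correct, is doing work that smoothness alone already guarantees; and at the end you implicitly invoke that a $\PP^1$-fibration with no singular fibres is a $\PP^1$-bundle (Noether--Enriques), which deserves a citation but is standard.
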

\begin{proof} Let us first show that $\chi(\mathcal{O}_S)=1-g$ and $K^2_S=8(1-g)-2\deg(D)-4$. Consider the exact sequence
\begin{equation}\label{def S}
0 \rightarrow \mathcal{O}_{\PP(E)}(-2) \rightarrow \mathcal{O}_{\PP(E)} \rightarrow \mathcal{O}_S \rightarrow 0
\end{equation}
It follows from \cite{hart}, Exercises III.8.1 and III.8.4 that $\chi(\mathcal{O}_{\PP(E)})=\chi(\mathcal{O}_{C})$ and $\chi(\mathcal{O}_{\PP(E)}(-2))=0$, thus $\chi(\mathcal{O}_S)=\chi(\mathcal{O}_{C})=1-g$.

We also get $K_{\PP(E)}=\mathcal{O}_{\PP(E)}(-3)\otimes \pi^*(\Lambda^3 E^* + K_{C})$ and, by adjunction,
\[K_S=\mathcal{O}_{S}(-1)\otimes \pi^*(\Lambda^3 E^* + K_{C})=\mathcal{O}_{S}(-1)\otimes \pi^*(\mathcal{O}_{C}(H+D) + K_{C})\]
By \cite{bea}, Theorem I.4 we find
\[K^2_S=\chi(\mathcal{O}_S(-2K_S))-2\chi(\mathcal{O}_S(-K_S))+\chi(\mathcal{O}_S)\]
Similarly to $\chi(\mathcal{O}_{\PP(E)})$, we can then compute \[\chi(\mathcal{O}_S(-2K_S))=\chi(S^2E^*(-2D-2H-2K_{C}))-\chi(\mathcal{O}_{C}(-2D-2H-2K_{\C}))\]
and
\[\chi(\mathcal{O}_S(-K_S))=\chi(E^*(-D-K_{C})).\]
By Riemann-Roch on $C$ we conclude that
\[K_S^2=8(1-g)-2\deg(D + H) = 8(1-g) - 2\deg(D) - 4.\]

Let us now show that $\pi\!: S \to C$ has exactly $2 \deg(D) + 4$ nodal fibres. The topological Euler characteristic is given by
\[e(S)=2e(C)+\sum_{s \in C} (e(F_s)-2)\]
where $F_s$ is the fibre over the point $s \in C$ (see \cite{bea}, Proposition X.10). Since $S$ is smooth a local computation shows that there are no double fibres, and the only singular fibres that can occur are the union of 2 irreducible rational components meeting in one point. Such fibres have topological Euler number equal to 3, thus the number $e(S)-2e(C)$ is precisely the number of singular fibres. We can compute this by Noether's Formula:
\[e(S)=12 \chi(\mathcal{O}_S)-K_S^2=4(1-g)+2\deg(D)+4\]
thus we have precisely $2 \deg(D) + 4$ singular fibres.

The proof will be complete if we can show that any component of a singular fibre is a $-1$ curve, since then we can contract one component for any singular fibre to obtain a ruled surface $\overline{S}\to C$. Let $F = D_1 + D_2$ be a singular fibre (so $D_i$, $i = 1, 2$ are its rational irreducible components). By \cite{bea} Proposition VIII.3 we have $D^2_i < 0$, and $F^2 = 0$ implies $D^2_1 + D^2_2 = -2 D_1 . D_2 = - 2$, so $D^2_i = -1$ for $i = 1, 2$.
\end{proof}
\begin{prop}
A generic $S \in |\mathcal{O}_{\PP(E)}(2)|$ has no holomorphic vector fields.
\end{prop}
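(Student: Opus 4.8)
The plan is to compute $H^0(S,T_S)$ and show it vanishes, by first projecting vector fields to the base and then controlling the vertical part. Since $C$ is hyperelliptic we have $g\geq 2$, so $\deg(-K_C)=2-2g<0$ and $H^0(C,T_C)=0$. As $\pi\!:S\to C$ has connected fibres we have $\pi_*\mathcal{O}_S=\mathcal{O}_C$, hence by the projection formula $H^0(S,\pi^*T_C)=H^0(C,T_C)=0$. Therefore every $v\in H^0(S,T_S)$ satisfies $d\pi(v)=0$, i.e. is a vertical field lying in $\ker(d\pi)\subset T_{\PP(E)/C}|_S$.

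Next I would reinterpret such a vertical field as an infinitesimal relative automorphism of the ambient $\PP(E)$. The relative Euler sequence gives $\pi_*T_{\PP(E)/C}\cong\operatorname{End}(E)/\mathcal{O}_C$, so $H^0(\PP(E),T_{\PP(E)/C})=H^0(C,\operatorname{End}(E)/\mathcal{O}_C)$. A fibrewise computation on $\PP^2$ yields $H^\bullet(\PP^2,T_{\PP^2}(-2))=0$, whence $R^\bullet\pi_*\bigl(T_{\PP(E)/C}\otimes\mathcal{O}_{\PP(E)}(-2)\bigr)=0$ and, restricting along $S\in|\mathcal{O}_{\PP(E)}(2)|$, the map
\[H^0(\PP(E),T_{\PP(E)/C})\xrightarrow{\ \sim\ }H^0(S,T_{\PP(E)/C}|_S)\]
is an isomorphism. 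Since $S$ is smooth (generic members are, by the previous lemma), the vertical field $v$ lies in $H^0(S,T_{\PP(E)/C}|_S)$ and hence extends to a global $\eta\in H^0(C,\operatorname{End}(E)/\mathcal{O}_C)$. Tangency of $\eta$ to $S$ means $\eta\cdot\varphi$ vanishes on $S$, and since the degree-$2$ part of the ideal of $S$ is $\C\varphi$ this reads $\eta\cdot\varphi\in\C\varphi$; conversely any such $\eta$ restricts to a vertical field. Thus
\[H^0(S,T_S)=\{\eta\in H^0(C,\operatorname{End}(E)/\mathcal{O}_C):\eta\cdot\varphi\in\C\,\varphi\},\]
where $\varphi\in H^0(C,S^2E^*)$ defines $S$ and $\eta\cdot\varphi$ is the natural derivation action on $S^2E^*$.

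It then remains to show this infinitesimal stabiliser of $[\varphi]$ is trivial for generic $\varphi$. Writing $E=L_1\oplus L_2\oplus L_3$ with $L_1=\mathcal{O}_C$, $L_2=\mathcal{O}_C(-H)$, $L_3=\mathcal{O}_C(-D)$ of pairwise distinct degrees, the diagonal torus of the relative automorphism group acts on
\[H^0(C,S^2E^*)=\bigoplus_{i\leq j}H^0\bigl(L_i^*\otimes L_j^*\bigr)\]
through six pairwise distinct characters, and each of the six summands is nonzero (they are $\mathcal{O}_C$, $\mathcal{O}_C(2H)$, $\mathcal{O}_C(2D)$, $\mathcal{O}_C(H)$, $\mathcal{O}_C(D)$, $\mathcal{O}_C(H+D)$). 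Hence a generic $\varphi$ has a nonzero component in every weight space, and any diagonal $\eta$ with $\eta\cdot\varphi\in\C\varphi$ must act by one and the same scalar on all six; such an $\eta$ is central, i.e. zero in $\operatorname{End}(E)/\mathcal{O}_C$. This settles the torus directions at once.

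The hard part will be the off-diagonal directions and their interaction with the torus. These are the global homomorphisms in $\Hom(L_j,L_i)$ with $L_i-L_j$ effective (here $\mathcal{O}_C(H)$, $\mathcal{O}_C(D)$ and $\mathcal{O}_C(D-H)$), which act on $\varphi$ by shifting its weight decomposition; I must rule out $\eta\cdot\varphi\in\C\varphi$ for a nonzero combination $\eta$ of such raising operators together with a diagonal term. I expect this to follow either from careful weight bookkeeping — a nonzero raising operator moves a component of $\varphi$ into a different weight space that a generic $\varphi$ cannot reabsorb — or, most safely, by exhibiting a single $\varphi$ (with $S$ smooth) whose infinitesimal stabiliser vanishes and invoking upper semicontinuity of the stabiliser dimension over the linear system $|\mathcal{O}_{\PP(E)}(2)|$. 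This mixed torus-plus-nilpotent analysis is the genuine obstacle, precisely because the raising operators do not respect the weight grading that trivialises the diagonal case.
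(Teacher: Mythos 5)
Your reduction in the first half is correct and essentially parallels the paper's, in places a little more economically: using $g\geq 2$ to show every vector field on $S$ is vertical lets you work with $T_{\PP(E)/C}$ throughout (the paper instead extends arbitrary fields, checking $H^1(\PP(E),\pi^*T_C(-S))=0$ as well); your vanishing $H^{\bullet}(\PP^2,T_{\PP^2}(-2))=0$, the resulting isomorphism $H^0(\PP(E),T_{\PP(E)/C})\cong H^0(S,T_{\PP(E)/C}|_S)$, and the identification of $H^0(S,T_S)$ with the infinitesimal stabiliser $\{\eta:\eta\cdot\varphi\in\C\varphi\}$ inside $H^0(C,\operatorname{End}(E)/\mathcal{O}_C)$ are all sound, as is your torus computation using that all six summands of $H^0(C,S^2E^*)$ are nonzero.

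However, the off-diagonal directions, which you explicitly defer (``the genuine obstacle''), are precisely the substance of the proposition, so the proposal is incomplete exactly where the work lies. For the record, the paper's route through this point is short but does use genericity twice, so your hoped-for ``weight bookkeeping'' alone will not suffice. Since $\deg D>\deg H>0$, the bundles $\mathcal{O}_C(-H)$, $\mathcal{O}_C(-D)$ and $\mathcal{O}_C(H-D)$ have no sections, so any $\eta$ is block-triangular: an endomorphism $A_0$ of $\mathcal{O}_C\oplus\mathcal{O}_C(-H)$ (itself lower-triangular, with diagonal scalars $\lambda,\mu$ and one entry $t\in H^0(\mathcal{O}_C(H))$) together with a column $v_A$ with entries in $H^0(\mathcal{O}_C(D))\oplus H^0(\mathcal{O}_C(D-H))$. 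Writing $\varphi=Q$ in block form with off-diagonal column $v_Q=(s_1,s_2)$, $s_1\in H^0(\mathcal{O}_C(D))$, $s_2\in H^0(\mathcal{O}_C(D+H))$, the stabiliser condition gives first ${}^tv_A\,v_Q=0$, which by the base-point-free pencil trick forces $v_A=0$ for generic $v_Q$; then $A_0^*v_Q=0$ gives $\lambda s_1=0$, hence $\lambda=0$, and $ts_1+\mu s_2=0$, which for generic $s_1,s_2$ is impossible unless $t=\mu=0$, since $s_1\cdot H^0(\mathcal{O}_C(H))$ is a proper subspace of $H^0(\mathcal{O}_C(D+H))$ missing a generic $s_2$. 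Your fallback strategy --- upper semicontinuity of $\dim\{\eta:\eta\cdot\varphi\in\C\varphi\}$ (the kernel of a linear map depending algebraically on $\varphi$) over the irreducible space $|\mathcal{O}_{\PP(E)}(2)|$ --- is legitimate in principle, but you never exhibit the single $\varphi$ with trivial stabiliser that it requires, and verifying one amounts to the same computation just sketched. As it stands, you have proved only that the stabiliser meets the diagonal trivially.
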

\begin{proof} Let us first show that any holomorphic vector field on $S$ extends to one on $\PP(E)$. Regarding $H^0(S, T_S)$ naturally as a subspace of $H^0(S, (T_{\PP(E)})_{|_S})$ it is enough to prove that the restriction map $H^0(\PP(E), T_{\PP(E)}) \to H^0(S, (T_{\PP(E)})_{|_S})$ is onto. Its cokernel is contained in $H^1(\PP(E), T_{\PP(E)}(-S))$. We will prove that the latter group vanishes. By the exact sequence defining the relative tangent bundle
\[0 \to T_{\PP(E)|C} \to T_{\PP(E)}\to \pi^* T_{C} \to 0\]
it is enough to prove the vanishing of the groups $H^1(\PP(E), T_{\PP(E)|C}(-S))$ and $H^1(\PP(E), \pi^* T_{C}(-S))$. This can be verified again using \cite{hart} Exercises III.8.1,~III.8.4.

It remains to be checked that there are no holomorphic vector fields on $\PP(E)$ preserving $S$. The infinitesimal action of vector fields on $\PP(E)$ on $S$ is given by a map $\varphi\!:\operatorname{End}(E)/\C I \to H^0(S^2 E^*)$ (writing $\operatorname{End}/\C I$ for dividing out by the addition of multiples of the identity). To see this we can represent $S$ by $Q \in H^0(S^2 E^*)$ uniquely up to rescaling; then $\varphi$ is given on $A \in \operatorname{End}(E)/\C I$ by $\varphi(A) = A^*Q + Q A$ (regarding $A, Q$ as matrices in a natural way). We write
\begin{equation}
A = \left(
\begin{matrix}
 A_0& v_A \\
 0  & 0
\end{matrix}
\right)
\end{equation}
where $A_0 \in \operatorname{End}(\mathcal{O}_{C}\oplus\mathcal{O}_{C}(-H))$, $^tv_A \in H^0(\mathcal{O}_{C}(D)\oplus\mathcal{O}_{C}(D-H))$. The first $0$ in the lower row means that $\mathcal{O}_{C}(-D)$ and $\mathcal{O}_{C}(H-D)$ have no global sections; the second is only the choice of a representative modulo $\C I$. Similarly we can write
\begin{equation}
Q = \left(
\begin{matrix}
Q_0     & v_Q \\
 ^tv_Q  & q
\end{matrix}
\right)
\end{equation}
where $Q_0 \in H^0(S^2(\mathcal{O}_{C}\oplus\mathcal{O}_{C}(H)))$, $^tv_Q \in H^0(\mathcal{O}_{C}(D)\oplus\mathcal{O}_{C}(D+H))$, $q \in H^0(\mathcal{O}_{C}(2D))$. Therefore
\begin{equation}
\varphi(A) = \left(
\begin{matrix}
A_0^*Q_0   + Q_0 A_0   & A^*_0 v_Q + Q_0 v_A \\
^tv_A Q_0 + {^tv_Q} A_0 & 2\,{^tv_A}v_Q
\end{matrix}
\right).
\end{equation}
The condition $\varphi(A) = 0$ implies ${^tv_A}v_Q = 0$. A slight twist of the ``free pencil trick" (see e.g. \cite{ACGH} p. 126) shows that for generic $v_Q$ we must have $v_A = 0$. In turn this implies $A^*_0 v_Q = 0$. The proof will be complete if we can show that for generic $v_Q$ this forces $A_0 = 0$ since then $\varphi(A) = 0$ implies $A = 0$. We compute
\begin{equation}
A^*_0 v_Q = \left(
\begin{matrix}
\lambda & 0 \\
   t    & \mu
\end{matrix}
\right)\left(
\begin{matrix}
s_1 \\ s_2
\end{matrix}
\right) = \left(
\begin{matrix}
\lambda s_1 \\
 t s_1 + \mu s_2
\end{matrix}
\right)
\end{equation}
where $t \in H^0(\mathcal{O}_{C}(H))$, $s_1 \in H^0(\mathcal{O}_{C}(D))$ and $s_2\in H^0(\mathcal{O}_{C}(D+H))$ (the upper right $0$ in $A^*_0$ expressing that $\mathcal{O}_{C}(-H)$ has no global sections). We would then have $\lambda = 0$ and $\mu s_2$ would lie in the image of the multiplication map $H^0(\mathcal{O}_{C}(H)) \Rt{\otimes s_1} H^0(\mathcal{O}_{C}(D + H))$. For generic $s_1, s_2$ this is impossible for dimension reasons.
\end{proof}
Finally it is clear that $F = \mathcal{O}_C\oplus\mathcal{O}_C(-H) \subset E$ satisfies $\mu^1(F) > \mu^1(E)$. According to Theorem \ref{main} then the divisor $\PP(F)\cap S$ slope-destabilises $S$ for $c \approx 1$, $A \approx 0$ and all large $g$. Note that in this particular case a more careful computation shows that taking $g > 16$ is enough to give instability for all $D$.

\vspace{.5cm}
Max Planck Institute for Mathematics\\
{\tt stoppa@mpim-bonn.mpg.de}
\vspace{.5cm}\\
Dipartimento di Matematica ``F. Casorati"\\
Universit\`a degli Studi di Pavia\\
{\tt elisa.tenni@unipv.it}
\end{document}